\def\NZQ{\mathbb}               
\def\NN{{\NZQ N}}
\def\ZZ{{\NZQ Z}}
\def\frk{\frak}               
\def\Phi{{\frk n}}
\def\Phi{{\frk N}}
\def\KK{{\mathbb K}}
\def\opn#1#2{\def#1{\operatorname{#2}}} 
\opn\chara{char}
\opn\length{\ell}
\opn\pd{pd}
\opn\rk{rk}
\opn\projdim{proj\,dim}
\opn\injdim{inj\,dim}
\opn\rank{rank}
\opn\depth{depth}
\opn\grade{grade}
\opn\height{height}
\opn\embdim{emb\,dim}
\opn\codim{codim}
\opn\Tr{Tr}
\opn\bigrank{big\,rank}
\opn\superheight{superheight}
\opn\lcm{lcm}
\opn\trdeg{tr\,deg}
\opn\reg{reg}
\opn\hilb{Hilb}
\opn\hpolynomial{h}
\opn\cdeg{cdeg}
\opn\lreg{lreg}
\opn\ini{in}
\opn\lpd{lpd}
\opn\size{size}
\opn\bigsize{bigsize}
\opn\cosize{cosize}
\opn\bigcosize{bigcosize}
\opn\sdepth{sdepth}
\opn\sreg{sreg}
\opn\link{link}
\opn\fdepth{fdepth}
\opn\lin{lin}
\opn\ini{in}
\opn\div{div}
\opn\Div{Div}
\opn\cl{cl}
\opn\Cl{Cl}
\opn\Spec{Spec}
\opn\Supp{Supp}
\opn\supp{supp}
\opn\Sing{Sing}
\opn\Ass{Ass}
\opn\Min{Min}
\opn\Mon{Mon}
\opn\dstab{dstab}
\opn\astab{astab}
\opn\Syz{Syz}
\opn\Ann{Ann}
\opn\Rad{Rad}
\opn\Soc{Soc}
\opn\Im{Im}
\opn\Ker{Ker}
\opn\Coker{Coker}
\opn\Am{Am}
\opn\Hom{Hom}
\opn\Tor{Tor}
\opn\Ext{Ext}
\opn\End{End}
\opn\Aut{Aut}
\opn\id{id}
\opn\nat{nat}
\opn\pff{pf}
\opn\Pf{Pf}
\opn\GL{GL}
\opn\SL{SL}
\opn\mod{mod}
\opn\ord{ord}
\opn\Gin{Gin}
\opn\Hilb{Hilb}
\opn\sort{sort}
\opn\initial{init}
\opn\ende{end}
\opn\height{height}
\opn\type{type}
\opn\mdeg{mdeg}
\opn\aff{aff}
\opn\con{conv}
\opn\relint{relint}
\opn\st{st}
\opn\lk{lk}
\opn\cn{cn}
\opn\core{core}
\opn\vol{vol}
\opn\link{link}
\opn\star{star}
\opn\lex{lex}
\opn\sign{sign}
\opn\gr{gr}
\def\pot#1#2{#1[\kern-0.28ex[#2]\kern-0.28ex]}
\opn\dirlim{\underrightarrow{\lim}}
\opn\inivlim{\underleftarrow{\lim}}
\let\to=\rightarrow
\def\Implies{\ifmmode\Longrightarrow \else
	\unskip${}\Longrightarrow{}$\ignorespaces\fi}
\def\implies{\ifmmode\Rightarrow \else
	\unskip${}\Rightarrow{}$\ignorespaces\fi}
\def\iff{\ifmmode\Longleftrightarrow \else
	\unskip${}\Longleftrightarrow{}$\ignorespaces\fi}
\newtheorem{Theorem}{Theorem}[section]
\newtheorem{Lemma}[Theorem]{Lemma}
\newtheorem{Corollary}[Theorem]{Corollary}
\newtheorem{Proposition}[Theorem]{Proposition}
\newtheorem{Definition}[Theorem]{Definition}
\newtheorem{Conjecture}[Theorem]{Conjecture}
\let\epsilon\varepsilon
\let\kappa=\varkappa
\def\pnt{{\raise0.5mm\hbox{\large\bf.}}}
\begin{document}
	\title{Regularity of binomial edge ideals of chordal graphs}
	\author {M. Rouzbahani Malayeri, S. Saeedi Madani, D. Kiani$^\ast$}
		
	\address{Mohammad Rouzbahani Malayeri, Department of Mathematics and Computer Science, Amirkabir University of Technology (Tehran Polytechnic), Tehran, Iran}
	\email{m.malayeri@aut.ac.ir}
	
		\address{Sara Saeedi Madani, Department of Mathematics and Computer Science, Amirkabir University of Technology (Tehran Polytechnic), Tehran, Iran, and School of Mathematics, Institute for Research in Fundamental Sciences (IPM), Tehran, Iran}
	\email{sarasaeedi@aut.ac.ir}
	
		\address{Dariush Kiani, Department of Mathematics and Computer Science, Amirkabir University of Technology (Tehran Polytechnic), Tehran, Iran, and School of Mathematics, Institute for Research in Fundamental Sciences (IPM), Tehran, Iran}
	\email{dkiani@aut.ac.ir}

	\begin{abstract}
	In this paper we prove the conjectured upper bound for Castelnuovo-Mumford regularity of binomial edge ideals posed in \cite{SK1}, in the case of chordal graphs. Indeed, we show that the regularity of any chordal graph $G$ is bounded above by the number of maximal cliques of $G$, denoted by $c(G)$. Moreover, we classify all chordal graphs $G$ for which $\mathcal{L}(G)=c(G)$, where $\mathcal{L}(G)$ is the sum of the lengths of longest induced paths of connected components of $G$. We call such graphs strongly interval graphs. Moreover, we show that the regularity of a strongly interval graph $G$ coincides with $\mathcal{L}(G)$ as well as $c(G)$.
	\end{abstract}

	\thanks{$^\ast$ Corresponding author}
	
	\subjclass[2010]{05E40; 16E05; 05C75}
	\keywords{Binomial edge ideals, Castelnuovo-Mumford regularity, chordal graphs, strongly interval graphs.}
	
	\maketitle
	
	\section{Introduction}\label{introduction}
	The binomial edge ideal of a graph was introduced in $2010$ by Herzog, Hibi, Hreinsd{\'o}ttir, Kahle and Rauh in \cite{HHHKR}, and at the same time by Ohtani in \cite{O}. 
\par Let $G$ be a graph with the vertex set $V(G)=\{1,\ldots, n\}$ and the edge set $E(G)$. Let $S=\KK[x_1 ,\ldots ,x_n , y_1 , \ldots , y_n]$ be the polynomial ring over a field $\KK$. Then the \emph{binomial edge ideal} of $G$, denoted by $J_G$ is an ideal in $S$ whose generators are all quadrics of the form $f_{ij}=x_{i}y_{j}-x_{j}y_{i}$, where $\{i,j\}\in E(G)$ and $1\leq i<j\leq n$. This ideal can be seen as a determinantal ideal generated by a collection of $2$-minors of a generic $(2\times n)$-matrix whose entries are all indeterminates.
\par Many of algebraic and homological properties and invariants of $J_G$ were investigated by several authors, see e.g. \cite{EHH, EZ, HHHKR, KS1, KS2, MM, S, SK}.
\par One of the most interesting invariants arising from the minimal graded free resolution of $\dfrac{S}{J_G}$ is the  \emph{Castelnuovo-Mumford regularity}, (or regularity for simplicity), namely,
\[
\reg \dfrac{S}{J_G}=\max \{j-i: \beta_{i,j}(\dfrac{S}{J_G})\neq 0\}.
\]
\par In \cite{SK}, all graphs $G$ for which $\reg \dfrac{S}{J_G}=1$ were characterized. Later in \cite{SK2}, the authors gave a characterization of graphs $G$ in which $\reg \dfrac{S}{J_G}=2$. In \cite{MM}, Matsuda and Murai showed that $\mathcal{L}(G)\leq \reg \dfrac{S}{J_G}\leq n-1$, where $\mathcal{L}(G)$ is the sum of the lengths of longest induced paths of connected components of $G$. In addition, they conjectured that $\reg \dfrac{S}{J_G}\leq n-2$ if $G\neq P_n$ (i.e. the path on $n$ vertices). Later, in \cite{KS2}, this conjecture was proved by the second and the third authors of this paper. On the other hand, in \cite{SK} it was shown that $\reg \dfrac{S}{J_G}\leq c(G)$, for any \emph{closed} graph $G$, where $c(G)$ denotes the number of maximal cliques of $G$. Recall that $G$ is a closed graph if $J_G$ has a quadratic Gr\"obner bases. Afterwards, it was conjectured in \cite{SK1} that the latter bound also holds in general. More precisely,
\begin{Conjecture}\label{conj}\cite[page~12]{SK1}
Let $G$ be a graph. Then $\reg \dfrac{S}{J_G}\leq c(G)$.
\end{Conjecture}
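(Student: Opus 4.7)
The plan is to prove the bound in the chordal case, which is what the paper promises. I would induct on the number of maximal cliques $c(G)$. The base case $c(G)=1$ with $G$ connected forces $G = K_n$, and the classical fact $\reg(S/J_{K_n}) = 1$ yields the bound. The disconnected case reduces to the connected one because $\reg(S/J_G)$ and $c(G)$ both decompose additively over connected components (the polynomial ring splits into independent blocks on disjoint vertex sets).

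For the inductive step with $c(G) \geq 2$, I would exploit the clique-tree structure of a chordal graph. There always exists a leaf clique $F$ in the clique tree, with separator $W := F \cap F'$ (where $F'$ is the unique adjacent clique), and every $v \in F \setminus W$ is a simplicial vertex of $G$ belonging to no maximal clique other than $F$. Picking such a $v$ and setting $G' := G \setminus v$, the graph $G'$ is still chordal and a short combinatorial analysis gives
\[
c(G') \;=\; \begin{cases} c(G) - 1 & \text{if } F \setminus W = \{v\}, \\ c(G) & \text{if } |F \setminus W| \geq 2. \end{cases}
\]

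The key technical claim I would aim to establish is that for such a simplicial $v$,
\[
\reg(S/J_G) \;\leq\; \reg(S/J_{G'}) + \bigl(c(G) - c(G')\bigr),
\]
which together with the inductive hypothesis $\reg(S/J_{G'}) \leq c(G')$ immediately yields $\reg(S/J_G) \leq c(G)$. To prove the claim, I would use the short exact sequence
\[
0 \to (S/(J_G : x_v))(-1) \xrightarrow{\cdot x_v} S/J_G \to S/(J_G + (x_v)) \to 0,
\]
giving $\reg(S/J_G) \leq \max\{\reg(S/(J_G : x_v)) + 1,\ \reg(S/(J_G + (x_v)))\}$. Because $v$ is simplicial with $N_G(v) = F \setminus v$ a clique, both $J_G : x_v$ and $J_G + (x_v)$ admit explicit graph-theoretic descriptions in terms of binomial edge ideals of chordal subgraphs on fewer vertices, whose regularity can be controlled by the inductive hypothesis together with auxiliary algebraic identities.

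The main obstacle is precisely the bookkeeping that the $+1$ from the colon side is absorbed by the drop in maximal clique count in Case 1 (where the leaf clique $F$ vanishes in $G'$), while in Case 2 (no clique lost) both terms on the right are controlled by $\reg(S/J_{G'})$. In particular, one needs a sharpened bound on $\reg(S/(J_G : x_v))$ that reflects the disappearance of the clique $F$, and a tight control of $\reg(S/(J_G + (x_v)))$ against $\reg(S/J_{G'})$. The chordality of $G$, the choice of $v$ from the ``private'' vertices $F \setminus W$ of a leaf clique, and the explicit simplicial structure of $N_G(v)$ are all essential to making the two colon/quotient ideals amenable to induction; this combinatorial-algebraic core of the argument is where I expect the real work to lie.
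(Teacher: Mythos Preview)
Your proposal has a genuine gap at the point you yourself flag as ``the real work.'' The ideals $J_G : x_v$ and $J_G + (x_v)$ are \emph{not} binomial edge ideals of subgraphs. Concretely, if $u\in N_G(v)$ then modulo $x_v$ the generator $f_{vu}=x_vy_u-x_uy_v$ becomes the monomial $-x_uy_v$, so $J_G+(x_v)=(x_v)+(x_uy_v:u\in N_G(v))+J_{G-v}$ is a mixed monomial--binomial ideal, and the colon $J_G:x_v$ is no better. Your assertion that both ``admit explicit graph-theoretic descriptions in terms of binomial edge ideals of chordal subgraphs'' is therefore false as stated, and there is no evident way to repair it: coloning by a single variable breaks the $x/y$ symmetry that makes binomial edge ideals tractable. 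A secondary problem is that your induction is on $c(G)$ alone, yet in your own Case~2 you have $c(G')=c(G)$, so the inductive hypothesis does not apply; you would need at least a double induction on $(c(G),n)$, which you do not set up.

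The paper's argument is organized around the \emph{opposite} choice of vertex and a different short exact sequence. One picks a vertex $v$ that is \emph{not} free (i.e.\ lies in $t\ge 2$ maximal cliques), forms the graph $G_v$ by completing $N_G(v)$ to a clique, and uses the primary-decomposition identity $J_G=J_{G_v}\cap\bigl((x_v,y_v)+J_{G-v}\bigr)$ together with the Mayer--Vietoris sequence
\[
0\longrightarrow S/J_G\longrightarrow S/J_{G_v}\oplus S/\bigl((x_v,y_v)+J_{G-v}\bigr)\longrightarrow S/\bigl((x_v,y_v)+J_{G_v-v}\bigr)\longrightarrow 0.
\]
Because the pair $(x_v,y_v)$ is used, every term is (up to dropping two variables) the binomial edge ideal of a chordal graph, so one stays inside the class where induction applies. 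The chordality enters through the key lemma $c(G_v)\le c(G)-t+1<c(G)$, which supplies the strict drop needed to absorb the ``$+1$'' coming from the third term; this lemma fails for general graphs. The induction is on $n+c(G)$, which handles both the $G_v$ branch (smaller $c$) and the $G-v$ branch (smaller $n$, with $c(G-v)\le c(G)$). Your simplicial-vertex/single-variable scheme does not give access to any of these ingredients.
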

Note that in the case of \emph{triangle-free} graphs, Conjecture \ref{conj} holds by the aforementioned result of Matsuda and Murai, in \cite{MM}. Indeed, if $G$ is a triangle-free graph with $r$ connected components, then the number of maximal cliques is at least $n-r$ which confirms the conjecture for $G$.
\par Recall that a \emph{chordal} graph is a graph with no induced cycle of length greater than $3$. In \cite{EZ}, Ene and Zarojanu verified Conjecture \ref{conj} for a class of chordal graphs, called \emph{block graphs} (i.e. chordal graphs in which any two maximal cliques intersect in at most one vertex). This result was slightly improved for a larger class of chordal graphs in \cite{KSN}. Very recently, in \cite{JK}, Conjecture \ref{conj} was proved for the so-called \emph{fan} graphs of complete graphs, another subclass of chordal graphs.
\par 
In this paper, we prove Conjecture \ref{conj} for all chordal graphs. Moreover, among other results, we classify all chordal graphs $G$ with $\mathcal{L}(G)=c(G)$ which gives a precise formula for this class of graphs. We call such graphs \emph{strongly interval graphs}.
\par This paper is organized as follows. In Section \ref{preliminaries}, we recall some definitions, facts and notations which are used throughout the paper. In Section \ref{main}, first we obtain an upper bound for the number of maximal cliques of chordal graphs. Then as the main result of this paper, using some lemmata, we prove Conjecture \ref{conj} for all chordal graphs in Theorem \ref{principal}. This section ends with discussing some examples of chordal graphs $G$ for which $\reg \dfrac{S}{J_G}$ attains one of $\mathcal{L}(G)$, $c(G)$, or none of them. In Section \ref{characterization}, we introduce a subclass of interval graphs and call them strongly interval graphs. Later, in Theorem \ref{strongly interval}, we classify all chordal graphs in which $\mathcal{L}(G)=c(G)$. More precisely, for a chordal graph $G$ we show that $G$ is a strongly interval graph if and only if $\mathcal{L}(G)=c(G)$. Using this classification and Theorem \ref{principal}, we get $\reg \dfrac{S}{J_G}=\mathcal{L}(G)=c(G)$, for any strongly interval graph $G$. Finally, at the end of Section \ref{characterization}, we show that there are many strongly interval graphs for which some other regularity bounds settled in \cite{KS2} and \cite{MM} could be far away from being sharp. We also discuss a very recent conjectured upper bound for $\reg \dfrac{S}{J_G}$ by Hibi and Matsuda in \cite{HM} for a class of strongly interval graphs. More precisely, for both above purposes, applying join product of graphs, we construct an infinite family $\{\mathcal{G}_t\}_{t=1}^{\infty}$ of strongly interval graphs for which 
\[
\lim_{t \to \infty}\dfrac{c(\mathcal{G}_t)}{\vert V(\mathcal{G}_t)\vert-2}=\lim_{t \to \infty}\dfrac{c(\mathcal{G}_t)}{\deg h_{\mathcal{G}_{t}}(\lambda)}=0.
\]

	\section{Preliminaries}\label{preliminaries}
In this section we recall some notions and known facts which are used throughout the paper. In this paper, all graphs are simple (i.e. with no loops, directed and multiple edges).
\par A \emph{simplicial complex} $\Delta$ on the vertex set $V=\{1,\ldots,n\}$ is a collection of subsets of $V$ with the following properties:
\begin{enumerate}
\item
for every $v\in V$, $\{v\}\in \Delta$, and
\item
$F\in \Delta$ and $E\subseteq F$ imply that $E\in \Delta$.
\end{enumerate}
The elements of $\Delta$ are called \emph{faces} of $\Delta$ and the maximal faces of $\Delta$ are called \emph{facets} of $\Delta$. We denote the set of all facets of $\Delta$ by $\mathcal{F}(\Delta)$.
\par Let $G$ be a graph. The \emph{clique complex} of $G$, denoted by $\Delta(G)$, is the simplicial complex whose faces are cliques (i.e. complete subgraphs) of $G$. We say that $v\in V(G)$ is a \emph{free vertex} of $\Delta(G)$, if $v$ belongs to exactly one maximal clique of $G$.
\par Let $G$ be a graph and $T\subseteq V(G)$. A subgraph $H$ of $G$ on the vertex set $T$ is said to be an \emph{induced subgraph} of $G$, whenever for any two vertices $v_1,v_2\in T$, one has $\{v_1,v_2\}\in E(H)$ if $\{v_1,v_2\}\in E(G)$. Now by $G-T$, we mean the induced subgraph of $G$ on the vertex set $V(G)\backslash T$. When $T=\{v\}\subseteq V(G)$, for simplicity, we use $G-v$ instead of $G-\{v\}$. A vertex $v\in V(G)$ is said to be a \emph{cut vertex} of $G$ whenever $G-v$ has more connected components than $G$. We say that $T$ has \emph{cut point property} for $G$, whenever each $v\in T$ is a cut vertex of the graph $G-(T\backslash \{v\})$. In particular the empty set $\emptyset$, has cut point property for $G$.
\par The neighborhood of a vertex $v\in V(G)$, denoted by $N_{G}(v)$, is the set of all vertices adjacent to $v$. Moreover, we set $N_{G}[v]:= N_{G}(v)\cup \{v\}$.
\par Let $G_1$ and $G_2$ be two graphs on the disjoint vertex sets $V(G_1)$ and $V(G_2)$, respectively. Then we mean by the \emph{join product} of $G_1$ and $G_2$, denoted by $G_1*G_2$, the graph on the vertex set $V(G_1)\cup V(G_2)$ and the edge set 
\[
E(G_1)\cup E(G_2)\cup \{\{u,v\}: u\in V(G_1)~\mathrm{and}~v\in V(G_2)\}.
\]
\par Let $G$ be a graph and $T\subseteq V(G)$. Suppose that $G_1 , \ldots , G_{c(T)}$ are connected components of $G-T$. Let $\tilde{G}_1 , \ldots , \tilde{G}_{c(T)}$ be complete graphs on the vertex sets $V(G_1), \ldots , V(G_{c(T)})$, respectively, and let
\[
P_T(G)=(x_v,y_v)_{v\in T} +J_{\tilde{G}_1}+\cdots +J_{\tilde{G}_{c(T)}}.
\]
Then by \cite[Theorem~3.2]{HHHKR}, it is known that $J_{G}=\bigcap\limits_{\substack{T\subseteq V(G)}}P_T(G)$. Moreover,  in \cite[Corollary~3.9]{HHHKR}, all the minimal prime ideals of $J_G$ were described. Indeed, it was shown that $P_T(G)\in \Min (J_G)$ if and only if $T$ has cut point property for $G$.

	\section{regularity of chordal graphs }\label{main}
	In this section we prove a conjecture posed in \cite{SK1}, in the case of chordal graphs, (see \cite[page~12]{SK1}). This conjecture asserts that the Castelnuovo-Mumford regularity of $\dfrac{S}{J_G}$ is bounded above by $c(G)$, for any graph $G$.  Before that, in the following proposition we give an upper bound for the number of maximal cliques of a chordal graph. This bound strengthens a bound given in \cite[Theorem~A]{CK}. This also yields that in the case of non-tree chordal graphs with no isolated vertices, the upper bound $c(G)$ is a tighter bound than the aforementioned upper bounds in \cite{KS2} and \cite{MM}. More precisely,
	\begin{Proposition}\label{pro1}
Let $G$ be a chordal graph on $n$ vertices which has a maximal clique with $t+1$ vertices for some $t\geq 1$. Then $c(G)\leq n-t$.
\end{Proposition}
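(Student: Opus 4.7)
The plan is to prove the inequality by induction on $n$, removing a suitable simplicial vertex outside the prescribed maximal clique $F$ of size $t+1$ at each step.

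For the base case $n=t+1$, the vertex set of $G$ coincides with $F$, so $G=K_{t+1}$ and $c(G)=1=n-t$. For the inductive step, assume $n>t+1$. Since $F$ has size $t+1<n$, the graph $G$ is not complete. I will invoke the classical fact that every non-complete chordal graph admits two non-adjacent simplicial vertices; as $F$ is a clique, those two simplicial vertices cannot both belong to $F$, so there exists a simplicial vertex $v\in V(G)\setminus F$. The induced subgraph $G-v$ is again chordal, and $F$ remains a maximal clique in $G-v$ (any extension $F\cup\{w\}$ in $G-v$ would also be an extension in $G$). The induction hypothesis therefore gives
\[
c(G-v)\leq (n-1)-t.
\]

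The key combinatorial step is to compare $c(G)$ with $c(G-v)$. Since $v$ is simplicial, $N_G[v]$ is the unique maximal clique of $G$ containing $v$. Any maximal clique of $G$ not containing $v$ lies inside $G-v$ and is easily seen to remain maximal there. Conversely, a maximal clique $M$ of $G-v$ fails to be maximal in $G$ only when $M\cup\{v\}$ is still a clique, i.e.\ $M\subseteq N_G(v)$; since $N_G(v)$ itself is a clique in $G-v$, this forces $M=N_G(v)$. Hence exactly one of the following two situations occurs: either $N_G(v)$ is maximal in $G-v$, in which case $N_G(v)$ gets ``replaced'' by $N_G[v]$ and $c(G)=c(G-v)$; or $N_G(v)$ is not maximal in $G-v$, in which case $N_G[v]$ is a new maximal clique and $c(G)=c(G-v)+1$. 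In either case,
\[
c(G)\leq c(G-v)+1\leq (n-1-t)+1=n-t,
\]
completing the induction.

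The only delicate point is the bookkeeping of how the set of maximal cliques changes when a simplicial vertex is removed; the rest is a clean application of two standard facts about chordal graphs (inheritance of chordality under induced subgraphs, and the existence of two non-adjacent simplicial vertices in any non-complete chordal graph). No detailed case analysis of the shape of $G$ is needed, since the choice of $v$ outside $F$ is guaranteed in full generality.
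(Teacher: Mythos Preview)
Your proof is correct and follows essentially the same approach as the paper: induction on $n$, using the classical fact that a non-complete chordal graph has two non-adjacent simplicial vertices (the paper calls these \emph{free vertices of $\Delta(G)$} and cites \cite{BGL}) to find such a vertex outside $F$, then comparing $c(G)$ with $c(G-v)$. Your treatment of the clique bookkeeping is in fact more detailed than the paper's, which simply observes that removing a free vertex drops the number of facets by at most one; otherwise the arguments are identical.
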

\begin{proof}
Let $F$ be a maximal clique of $G$ with $\vert F \vert = t+1$. We prove the assertion by using induction on $n$. Note that if $G$ is a complete graph, then the assertion is clear. If $n=2$, then $t=1$, and hence $G$ is complete and the assertion holds. Now we may assume that $G$ is not complete. So, $\Delta(G)$ has two non-adjacent free vertices $v_1$ and $v_2$, by \cite[Theorem~2.1]{BGL}, (see also \cite{CIRS} and \cite{HHMM}). Thus $\{v_1,v_2\} \nsubseteq F$. We may assume that $v_1 \notin F$. We show that $c(G)\leq c(G-v_{1})+1$. Indeed, since $v_1$ is a free vertex of $\Delta(G)$, we have $\vert \mathcal{F}(\Delta(G-v_{1})) \vert \geq \vert \mathcal{F}(\Delta(G))\vert -1$, and hence $c(G) \leq c(G-v_1)+1$.
On the other hand, $G-v_{1}$ is chordal and $F\in \mathcal{F}(\Delta(G-v_{1}))$, since $v_1\notin F$. Therefore by induction hypothesis, $c(G-v_{1})\leq n-1-t$. So we get 
\[
c(G)\leq c(G-v_{1})+1 \leq n-1-t+1=n-t.
\]
\end{proof}

\par Now, we introduce a graph which plays an important role in our proofs. Let $G$ be a graph and $v\in V(G)$. Associated to the vertex $v$ we define a new graph, denoted by $G_{v}$, with the vertex set $V(G)$ and the edge set 
\[
E(G)\cup \{\{u,w\}: \{u,w\}\subseteq N_{G}(v)\}.
\]
\par \medskip In the following lemma, we show how the minimal prime ideals of $J_G$, $J_{G_v}$ and $J_{G-v}$ are related.
	\begin{Lemma}\label{3part}
	Let $G$ be a graph, $T\subseteq V(G)$ and $v\in V(G)$. Then the following statements hold:
	\begin{enumerate}
	\item[{(a)}] \cite[Proposition~2.1]{RR} If $v\in T$ is a free vertex of $\Delta(G)$, then $P_{T}(G)\notin \Min (J_G)$.
	\item[{(b)}] If $v\notin T$, then $P_{T}(G)=P_{T}(G_{v})$.
	\item[{(c)}] If $v\in T$, then $P_{T}(G)=(x_v, y_v)+P_{T\backslash \{v\}}(G-v)$.
	\end{enumerate}
\end{Lemma}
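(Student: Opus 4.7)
The plan is to read off both statements directly from the definition
\[
P_T(G)=(x_u,y_u)_{u\in T}+J_{\tilde{G}_1}+\cdots +J_{\tilde{G}_{c(T)}},
\]
where $\tilde{G}_1,\dots,\tilde{G}_{c(T)}$ are the complete graphs on the vertex sets of the connected components of $G-T$. Part (a) is already cited from \cite{RR}, so only (b) and (c) require argument, and each reduces to a bookkeeping check about how passing from $G$ to $G_v$ (respectively to $G-v$) affects the partition of $V(G)\setminus T$ into connected components.

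For part (b), assume $v\notin T$, so $v$ lies in some component $G_j$ of $G-T$. The edges of $G_v$ that are not already in $G$ lie entirely inside $N_G(v)$; since every neighbor of $v$ that is not in $T$ belongs to the same component as $v$, each such new edge has both endpoints in $V(G_j)$. Thus the added edges cannot merge two distinct components of $G-T$, and the connected components of $G_v-T$ have exactly the same vertex sets $V(G_1),\dots,V(G_{c(T)})$. Their clique completions are therefore the same graphs $\tilde{G}_1,\dots,\tilde{G}_{c(T)}$, and plugging into the displayed formula for $P_T(G_v)$ gives $P_T(G_v)=P_T(G)$.

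For part (c), assume $v\in T$. Deleting $T$ from $G$ is identical to first deleting $v$ and then deleting $T\setminus\{v\}$, so the connected components of $G-T$ and of $(G-v)-(T\setminus\{v\})$ coincide; in particular the graphs $\tilde{G}_1,\dots,\tilde{G}_{c(T)}$ appearing in $P_T(G)$ and in $P_{T\setminus\{v\}}(G-v)$ are literally the same. On the other hand, the variable part splits as $(x_u,y_u)_{u\in T}=(x_v,y_v)+(x_u,y_u)_{u\in T\setminus\{v\}}$. Combining these two observations in the defining expression for $P_T(G)$ yields precisely $(x_v,y_v)+P_{T\setminus\{v\}}(G-v)$.

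No substantial obstacle is anticipated; the entire content of the lemma is structural. The only step that must be stated carefully is the claim in (b) that the additional edges of $G_v$ do not join previously separated components of $G-T$, and this is forced by the simple fact that all the new edges sit inside the single neighborhood $N_G(v)$, which is contained in one component together with $v$ itself.
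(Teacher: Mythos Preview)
Your proposal is correct and follows essentially the same approach as the paper: for (b) you track the vertex sets of the components of $G-T$ versus $G_v-T$ via the observation that the added edges sit inside $N_G(v)$, and for (c) you use the identity $(G-v)-(T\setminus\{v\})=G-T$ and split off the variables $x_v,y_v$, exactly as the paper does. The only minor difference is that the paper also includes a short proof of (a) for the reader's convenience rather than relying solely on the citation.
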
	
\begin{proof}
(a)
For convenience of the reader we also give a short proof for this statement here. Suppose on contrary that $P_T(G)\in \Min (J_G)$.
Then, by \cite[Corollary~3.9]{HHHKR}, $T$ has cut point property for $G$. Thus $v$ is a cut vertex of the graph $G-(T\backslash \{v\})$. So $G-T$ has two connected components $H_1$ and $H_2$ where $\{v,v_1\}\in E(G)$ and $\{v,v_2\}\in E(G)$ for some vertices $v_1\in V(H_1)$ and $v_2 \in V(H_2)$. Since $v$ is a free vertex of $\Delta (G)$, it follows that $v_1$ and $v_2$ are adjacent, which contradicts the fact that $v_1$ and $v_2$ belong to different connected components of $G-T$.
\par (b)
It is clear that those connected components of $G-T$ and $G_{v}-T$ which do not contain $v$ are exactly the same. Since $v\notin T$, by definition of $G_v$ it follows that the vertex set of the connected component of $G-T$ which contains $v$ is the same as the one for $G_{v}-T$. Therefore the result follows.
\par (c)
Since $(G-v)-(T\backslash \{v\})=G-T$, the assertion is clear.
\end{proof}
\par In the following lemma, we compare the number of maximal cliques of $G$ and $G-v$, for any vertex $v$ of a graph $G$.
	\begin{Lemma}\label{general}
	Let $G$	be a graph and $v\in V(G)$. Then $c(G-v)\leq c(G)$.
	\end{Lemma}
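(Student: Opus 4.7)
The plan is to build an explicit injection $\phi$ from the set of maximal cliques of $G-v$ into the set of maximal cliques of $G$, from which $c(G-v)\leq c(G)$ is immediate. The map I would define sends a maximal clique $F$ of $G-v$ to $F$ itself if $F$ is already a maximal clique of $G$, and otherwise to $F\cup\{v\}$.

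The step that requires actual content is checking that $\phi$ is well-defined, i.e., that in the second case $F\cup\{v\}$ really is a maximal clique of $G$. I would argue this as follows: if $F$ is a maximal clique of $G-v$ but not of $G$, pick any maximal clique $F'$ of $G$ strictly containing $F$. Then $F'\setminus\{v\}$ is a clique of $G-v$ containing $F$, so the maximality of $F$ in $G-v$ forces $F'\setminus\{v\}=F$. Combined with $F\subsetneq F'$, this pins down $F'=F\cup\{v\}$, so $F\cup\{v\}$ is indeed a maximal clique of $G$ (and is even the unique one containing $F$).

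Injectivity of $\phi$ is then a short bookkeeping step. Since any maximal clique $F$ of $G-v$ satisfies $v\notin F$, the image $\phi(F)$ contains $v$ precisely when $\phi(F)=F\cup\{v\}$; in either case $F$ is recovered as $\phi(F)\setminus\{v\}$, so $\phi(F_1)=\phi(F_2)$ immediately gives $F_1=F_2$. I expect the main (in fact only) obstacle to be the well-definedness argument in the second paragraph — showing that when $F$ fails to be maximal in $G$, the unique candidate extension $F\cup\{v\}$ really is a clique of $G$ and a maximal one — and this is handled cleanly using the maximality of $F$ in $G-v$.
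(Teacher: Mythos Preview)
Your proposal is correct and rests on the same key observation as the paper's proof: every maximal clique $F$ of $G-v$ is either already maximal in $G$, or else $F\cup\{v\}$ is a maximal clique of $G$. The paper packages this dually---it removes $v$ from each maximal clique of $G$, takes the maximal elements of the resulting set, and shows that $\mathcal{F}(\Delta(G-v))$ sits inside that set---whereas you build the explicit injection $F\mapsto F$ or $F\mapsto F\cup\{v\}$ directly; the arguments are essentially the same, with your formulation arguably a bit more streamlined.
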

\begin{proof}
Let  $X=\{F\backslash \{v\}: F\in \mathcal{F}(\Delta (G))\}$.  We denote by  $  X^{ ' }$ the set of all maximal elements of  $X$ with respect to inclusion. It is enough to show that $  \mathcal{F}(\Delta(G-v))\subseteq X^{ ' }$, because then we have 
\[
c(G-v)= \vert \mathcal{F}(\Delta(G-v))  \vert  \leq \vert X^{ ' }\vert \leq \vert \mathcal{F}(\Delta (G)) \vert=c(G).
\]
 Now let $E\in \mathcal{F}(\Delta(G-v))$. So, clearly  $E\in \Delta(G)$  and $v\notin E$. We show that $E\in X^{'}$. \par First, suppose that $E \cup \{v\}\in \Delta(G) $. Then $E \cup \{v\}$ is a facet of  $\Delta(G)$ and hence $E\in X$. Moreover we show that $E\in X^{'}$. Indeed, let $T\in X$ with $E\subseteq T$. Then $T=T^{'}\backslash \{v\}$ for some $T^{'} \in  \mathcal{F}(\Delta(G))$. It follows that $v\in T^{'}$, since otherwise $T=T^{'} \in \mathcal{F}(\Delta(G-v))$ and hence $T^{'}=E$. This is a contradiction, because $T^{'}$ and $E \cup \{v\}$ are both facets of $\Delta(G)$. Since $E\subseteq T^{'}\backslash \{v\}$, we have $E \cup \{v\}=T^{'}$. Therefore $E=T^{'}\backslash \{v\}=T$, so that $E\in X^{'}$. \par Next suppose that $E \cup \{v\}$ is not a face of $\Delta(G)$. Therefore $E\in \mathcal{F}(\Delta(G))$, which clearly implies that $E\in X^{'}$.
\end{proof}

\par In the next lemma, we show that for any chordal graph $G$, $c(G_v)$ is smaller than $c(G)$, for any vertex $v$ which is not a free vertex of $\Delta(G)$. Note that this does not hold for all graphs in general, see for example Figure \ref{fig1}. Nevertheless, our computational experiments by Macaulay2 show that $\reg \dfrac{S}{J_G}=4$ which confirms  Conjecture \ref{conj} for this example.

\begin{figure}[H]
\centering
\begin{tikzpicture}[scale=1.3,line cap=round,line join=round,>=triangle 45,x=1.0cm,y=1.0cm]
\draw (0.5,2.)-- (0.,1.);
\draw (-1.,0.5)-- (0.,1.);
\draw (-1.,0.5)-- (0.,0.);
\draw (0.,0.)-- (0.5,-1.);
\draw (0.5,-1.)-- (1.,0.);
\draw (1.,0.)-- (2.,0.5);
\draw (2.,0.5)-- (1.,1.);
\draw (1.,1.)-- (0.5,2.);
\draw (1.,0.)-- (1.,1.);
\draw (1.,1.)-- (0.,1.);
\draw (0.,0.)-- (0.,1.);
\draw (1.,0.)-- (0.,0.);
\begin{scriptsize}
\draw [fill=black] (0.,1.) circle (1.5pt);
\draw [fill=black] (1.,1.) circle (1.5pt);
\draw [fill=black] (1.,0.) circle (1.5pt);
\draw [fill=black] (0.,0.) circle (1.5pt);
\draw [fill=black] (0.5,2.) circle (1.5pt);
\draw [fill=black] (2.,0.5) circle (1.5pt);
\draw [fill=black] (0.5,-1.) circle (1.5pt);
\draw [fill=black] (-1.,0.5) circle (1.5pt);
\end{scriptsize}
\end{tikzpicture}
\vspace{.5cm}\caption{A non-chordal graph $G$ with $c(G_v)=c(G)=4$ for any vertex $v$ of $G$.}
\label{fig1}
\end{figure}
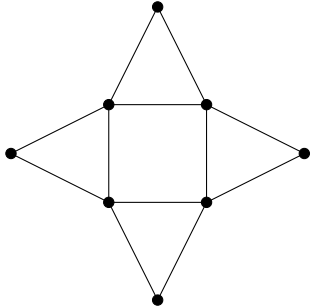

	\begin{Lemma}\label{chordal}
	Let $G$	be a chordal graph and $v$ be a vertex of $G$ which lies in $t$ maximal cliques of $G$. Then 
\[	
c(G_{v})\leq c(G)-t+1.
\]
In particular, if $t\geq2$, then $c(G_v)<c(G)$.
	\end{Lemma}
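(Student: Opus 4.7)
The plan is to compare the maximal cliques of $G$ and $G_v$ by separating those that contain $v$ from those that do not. Writing $\mathcal{F}_{v}(H)$ and $\mathcal{F}_{\bar v}(H)$ for the maximal cliques of a graph $H$ that do, respectively do not, contain $v$, the identity $c(H)=|\mathcal{F}_{v}(H)|+|\mathcal{F}_{\bar v}(H)|$ splits the lemma into two pieces, one on each side.

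The ``contains $v$'' piece is the easy one. In $G_v$ all neighbors of $v$ have been made pairwise adjacent, so $N_G[v]$ is a clique; and since the neighborhood of $v$ does not change when passing from $G$ to $G_v$, the set $N_G[v]$ is the \emph{unique} maximal clique of $G_v$ containing $v$. Thus $|\mathcal{F}_{v}(G_v)|=1$, whereas by hypothesis $|\mathcal{F}_{v}(G)|=t$.

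The ``does not contain $v$'' piece is the heart of the proof: I would show that every $C\in\mathcal{F}_{\bar v}(G_v)$ is already a maximal clique of $G$, so that inclusion yields an injection $\mathcal{F}_{\bar v}(G_v)\hookrightarrow\mathcal{F}_{\bar v}(G)$. Granting this,
\[
c(G_v)=1+|\mathcal{F}_{\bar v}(G_v)|\le 1+|\mathcal{F}_{\bar v}(G)|=1+(c(G)-t)=c(G)-t+1,
\]
and the ``in particular'' clause follows at once. To verify the claim, fix such a $C$. Maximality in $G_v$ together with $v\notin C$ forces $C\not\subseteq N_G(v)$ (otherwise $C\cup\{v\}$ would be a strictly larger clique of $G_v$), so I can pick some $w\in C\setminus N_G[v]$. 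Any pair in $C$ with at least one endpoint outside $N_G(v)$ is already an edge of $G$, because the new edges of $G_v$ all lie inside $N_G(v)$. The only doubtful pairs in $C$ are therefore those of the form $u_1,u_2\in C\cap N_G(v)$; for these I would invoke chordality through the $4$-cycle $u_1,v,u_2,w$, whose other three edges are in $G$ (the edges to $v$ since $u_i\in N_G(v)$, the edges to $w$ by the previous remark). The only possible chords of this cycle are $\{v,w\}$ and $\{u_1,u_2\}$; since $w\notin N_G(v)$ the first is absent, so chordality forces $\{u_1,u_2\}\in E(G)$. Hence $C$ is a clique of $G$, and since $G\subseteq G_v$, maximality of $C$ in $G_v$ upgrades to maximality in $G$.

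The whole argument rests on this $4$-cycle chord step; this is the only place where chordality of $G$ is genuinely used, and Figure~\ref{fig1} shows that without it the estimate can fail. The rest is routine bookkeeping on facets of clique complexes.
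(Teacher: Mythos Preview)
Your proof is correct and follows essentially the same route as the paper's. The paper defines $Y=\{\bar{F},F_{t+1},\dots,F_c\}$ with $\bar{F}=N_G[v]$ and shows $\mathcal{F}(\Delta(G_v))\subseteq Y$; your decomposition into $\mathcal{F}_v$ and $\mathcal{F}_{\bar v}$ is the same containment phrased slightly differently, and the crucial step in both arguments is the identical $4$-cycle chord argument exploiting chordality.
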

\begin{proof}
Let $c=c(G)$ and $\mathcal{F}(\Delta(G))=\{F_1, \ldots , F_c\}$. Without loss of generality assume that $F_1, \ldots , F_t$ are all facets of  $\Delta(G)$ that contain $v$. Let $Y=\{\bar{F}, F_{t+1}, \ldots , F_c\}$, where $\bar{F}$ is the clique on the vertex set $\bigcup\limits_{j=1}^{t} F_{j}$. It suffices to show that $\mathcal{F}(\Delta (G_{v}))\subseteq Y$. This then implies that
\[ 
c(G_{v})= \vert \mathcal{F}(\Delta(G_{v}))  \vert \leq \vert Y\vert = c-t+1.
\]
 \par First note that $\bar{F}\in \mathcal{F}(\Delta(G_{v}))$, since $\bar{F}=N_{G}[v]$. Now suppose there exists $F \in \mathcal{F}(\Delta(G_{v}))$ with $F\notin Y$. In particular, $F\neq \bar{F}$, so that there exists  $j \in F$ with $j \notin N_G[v]$. On the other hand, since $F\notin \{F_{t+1},\ldots, F_{c}\}$, there exist $l,k\in F$ with $l\neq k$ such that $l,k \in N_G(v)$ and  $\{l,k\}\notin E(G)$. So we get the $4$-cycle $j,l,v,k,j$ in $G$ which contradicts chordality of $G$. Therefore $\mathcal{F}(\Delta(G_{v}))\subseteq Y$.
\end{proof}
\par The following theorem is the main result of this section.
\begin{Theorem}\label{principal}
Let $G $ be a chordal graph. Then  $\reg \dfrac{S}{J_G}\leq c(G)$. 
\end{Theorem}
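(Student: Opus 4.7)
The plan is to prove the theorem by induction on the pair $(|V(G)|,\, c(G))$ in the lexicographic order. The base case covers graphs every vertex of which is free in $\Delta(G)$: in this situation $G$ is a disjoint union of complete graphs $K_{n_1}\sqcup\cdots\sqcup K_{n_r}$, so $S/J_G$ decomposes as a tensor product of the coordinate rings of the individual $J_{K_{n_i}}$, each of which has regularity at most $1$, giving $\reg(S/J_G)\leq r = c(G)$.

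For the inductive step, I would choose a vertex $v$ that is not free in $\Delta(G)$, so that $v$ lies in $t\geq 2$ maximal cliques. The key algebraic input is the decomposition
\[
J_G \;=\; J_{G_v} \,\cap\, \bigl((x_v,y_v)+J_{G-v}\bigr),
\]
which can be extracted from the primary decomposition $J_G=\bigcap_T P_T(G)$ by splitting the intersection according to whether $v\in T$ and applying parts (b) and (c) of Lemma \ref{3part}. This decomposition gives rise to the short exact sequence
\[
0 \To S/J_G \To S/J_{G_v}\,\oplus\, S/\bigl((x_v,y_v)+J_{G-v}\bigr) \To S/\bigl((x_v,y_v)+J_{G_v-v}\bigr) \To 0,
\]
where the rightmost term is simplified using $J_{G_v}+(x_v,y_v)=J_{G_v-v}+(x_v,y_v)$ together with $J_{G-v}\subseteq J_{G_v-v}$.

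Next I would verify that $G_v$ is again chordal: an induced cycle in $G_v$ of length $\geq 4$ must avoid $v$, and by replacing each ``new'' edge $\{u,w\}\subseteq N_G(v)$ on such a cycle by the path $u,v,w$ one would obtain an induced cycle of length $\geq 5$ in $G$, contradicting chordality. Granted this, Lemma \ref{chordal} gives $c(G_v)\leq c(G)-t+1\leq c(G)-1$, so $(|V(G_v)|,c(G_v))$ is lexicographically strictly smaller than $(|V(G)|,c(G))$; the graphs $G-v$ and $G_v-v$ are also lex-smaller since each has strictly fewer vertices, and both are chordal. Applying the induction hypothesis together with Lemma \ref{general} and the invariance of regularity under quotienting by the regular sequence of linear forms $x_v,y_v$ gives $\reg(S/J_{G_v})\leq c(G)-1$, $\reg(S/((x_v,y_v)+J_{G-v}))\leq c(G-v)\leq c(G)$, and $\reg(S/((x_v,y_v)+J_{G_v-v}))\leq c(G_v-v)\leq c(G)-1$. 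The standard regularity estimate for a short exact sequence then yields
\[
\reg(S/J_G) \;\leq\; \max\bigl\{c(G)-1,\; c(G),\; (c(G)-1)+1\bigr\} \;=\; c(G).
\]

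The main technical obstacle is establishing the decomposition $J_G=J_{G_v}\cap((x_v,y_v)+J_{G-v})$ and verifying that $G_v$ inherits chordality from $G$; after these two facts are in place, the rest is a bookkeeping exercise made to work precisely by the choice of lexicographic induction, since $G_v$ has the same vertex count as $G$ but strictly fewer maximal cliques, while $G-v$ and $G_v-v$ have strictly fewer vertices.
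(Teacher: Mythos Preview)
Your proposal is correct and follows essentially the same route as the paper: the paper also inducts (on $n+c(G)$ rather than lexicographically, but to the same effect), picks a non-free vertex $v$, identifies $Q_1=J_{G_v}$ and $Q_2=(x_v,y_v)+J_{G-v}$ via the primary decomposition, checks that $G_v$ is chordal, and finishes with the same short exact sequence and the same three regularity estimates. One small omission in your sketch: to identify $\bigcap_{v\notin T}P_T(G_v)$ with $J_{G_v}$ you also need part~(a) of Lemma~\ref{3part} (applied to the free vertex $v$ of $\Delta(G_v)$), not just parts~(b) and~(c).
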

\begin{proof}
The idea of the proof is based on the proof of \cite[Theorem~1.1]{EHH}. We prove the assertion by using induction on $n+c(G)$, where $n= \vert V(G) \vert$. If $n+c(G)=3$, then $n=2$ and  $c(G)=1$. Therefore, $G $ is complete, and hence by \cite[Theorem~3.2]{SK}, $\reg \dfrac{S}{J_G}=1$. If $G$ is a disjoint union of complete graphs, then the result follows by \cite[Proposition~3.1.33]{V}, \cite[Lemma~2.1]{K} and  \cite[Theorem~3.2]{SK}. Otherwise $G$ has a vertex $v$ which is not a free vertex of $\Delta(G)$.\par
First we show that $G_v$ is chordal. Let $C\hspace{-1.2mm}:j_1,j_2, \ldots, j_m , j_1$ be an induced cycle of $G_{v}$ where $m\geq 4$. Then $E(C)\nsubseteq E(G)$, since $G$  is chordal. Then we may assume that $\{j_1,j_m\}\in E(C)\backslash E(G)$. Moreover, for each $t$, with $1<t<m$ we have $\{j_t,v\}\notin E(G)$, since $G$ is chordal. Therefore the cycle $C^{'}\hspace{-1.9mm}:v,j_1 ,j_2, \ldots , j_m, v$ is an induced cycle in $G$, which contradicts chordality of $G$, and hence $G_v$ is chordal.
\par Let $Q_1=\bigcap\limits_{\substack{T\subseteq V(G)\\v\notin T}}P_T(G)$. We have
\[
J_{G_{v}}=(\bigcap\limits_{\substack{T\subseteq V(G)\\v\in T}}P_T(G_{v}))\cap (\bigcap\limits_{\substack{T\subseteq V(G)\\v\notin T}}P_T(G_{v})).
\]
Since $v$ is a free vertex of $\Delta(G_v)$, it follows from Lemma \ref{3part},~part(a), that $J_{G_v}=\bigcap\limits_{\substack{T\subseteq V(G)\\v\notin T}}P_T(G_{v})$. Also by Lemma \ref{3part},~part(b), we have $\bigcap\limits_{\substack{T\subseteq V(G)\\v\notin T}}P_T(G_{v})=\bigcap\limits_{\substack{T\subseteq V(G)\\v\notin T}}P_T(G)$. Therefore $J_{G_v}=Q_1$. On the other hand, by Lemma \ref{chordal} and induction hypothesis, we have $\reg \dfrac{S}{J_{G_v}} \leq c(G_v)<c(G)$. So,
\begin{equation}\label{eq01}
\reg \dfrac{S}{Q_1}<c(G). 
\end{equation}
\par Now let $Q_2=\bigcap\limits_{\substack{T\subseteq V(G)\\v\in T}}P_T(G)$. We show that $Q_2=(x_v,y_v)+J_{G-v}$. Clearly we have $(x_v,y_v)+J_{G-v}\subseteq Q_2$. Now suppose that $f\in Q_2$. Then Lemma \ref{3part},~part(c) implies that $f\in (x_v,y_v)+ P_{T\backslash \{v\}}(G-v)$, for every $T\subseteq V(G)$ with $v\in T$. So $f=h_{T}+g_{T}$ for some $h_T \in (x_v,y_v)$ and $g_T\in P_{T\backslash \{v\}}(G-v)$. We may assume that $g_T\in S_v$, where $S_v=\KK[x_i ,y_i \mid i\in V(G)\backslash \{v\}]$. Indeed, we have $g_T=(r_{1}+r^{'}_1)g_1+ \cdots +(r_{s}+r^{'}_{s})g_s$, where $g_i=f_{l_{i}k_{i}}$ for some $l_{i}, k_{i}\in V(G)\backslash \{v\}$, $r_i \in S_v$ and $r^{'}_i \in (x_v,y_v)$ for $i=1,\ldots, s$. Let $g^{'}_T=r_{1}g_{1}+ \cdots +r_{s}g_{s}$ and $g^{''}_T=r^{'}_{1}g_1+ \cdots +r^{'}_{s}g_s$. Then $g^{'}_T \in S_v \cap P_{T-v}(G-v)$, $g^{''}_T \in (x_v,y_v)$ and $g_T= g^{'}_T + g^{''}_T$. Therefore, $ f=(h_T+g^{''}_T)+g^{'}_T$.\par
In particular, $f=h_{\{v\}}+g_{\{v\}}=h_T+g_T$, for any $T\subseteq V(G)$ with $v\in T$. Now by putting $x_v=y_v=0$ in both sides of the last equality, we get $g_{\{v\}}=g_T$. Then $g_{\{v\}}\in P_{T\backslash \{v\}}(G-v)$ for every $T\subseteq V(G)$ with $v\in T$. Hence 
\[
g_{\{v\}}\in \bigcap\limits_{\substack{T\subseteq V(G)\\v\in T}}P_{T\backslash \{v\}}(G-v)=\bigcap\limits_{\substack{W\subseteq V(G)\backslash \{v\}}}P_{W}(G-v)=J_{G-v}.
\]
So, $f\in (x_v,y_v) + J_{G-v}$, and hence $Q_2= (x_v,y_v)+ J_{G-v}$. Thus, $\reg \dfrac{S}{Q_2}=\reg \dfrac{S_v}{J_{G-v}}$. By induction hypothesis, $\reg \dfrac{S_v}{J_{G-v}}\leq c(G-v)$, since $G-v$ is chordal. This implies that $\reg \dfrac{S_v}{J_{G-v}}\leq c(G)$, since $c(G-v)\leq c(G)$, by Lemma \ref{general}. Therefore
\begin{equation}\label{eq02}
\reg \dfrac{S}{Q_2}\leq c(G).
\end{equation}
\par Next, we have $Q_1+Q_2=(x_v,y_v)+J_{G_{v}-v}$. Therefore, 
$\reg \dfrac{S}{Q_1+Q_2}=\reg \dfrac{S_v}{J_{G_{v}-v}}$. By induction hypothesis, $\reg \dfrac{S_v}{J_{G_{v}-v}} \leq c(G_{v}-v)$, since $G_{v}-v$ is chordal. Therefore, by Lemma \ref{general} and Lemma \ref{chordal}, we have
  \begin{equation}\label{eq03}
   \reg \dfrac{S}{Q_1+Q_2}< c(G).
  \end{equation} 
\par Now consider the following short exact sequence:
\[
0\longrightarrow  \dfrac{S}{J_G}\longrightarrow \dfrac{S}{Q_1}\oplus \dfrac{S}{Q_2} \longrightarrow  \dfrac{S}{Q_1+Q_2}\longrightarrow 0.
\]
By \cite[Corollary~18.7]{P}, we have
\[
 \reg \dfrac{S}{J_G} \leq \max \{ \reg \dfrac{S}{Q_1}, \reg \dfrac{S}{Q_2}, \reg\dfrac{S}{Q_1+Q_2}+1 \}.
\]
Finally by \eqref{eq01},\eqref{eq02} and \eqref{eq03}, we get
\[
\reg \dfrac{S}{J_G} \leq c(G).
\]
\end{proof}
We would like to remark that Theorem \ref{principal} together with \cite[Theorem~2.1]{SK2} also provide many non-chordal graphs for which Conjecture \ref{conj} is valid. In fact, if $G$ is the join product of two non-complete chordal graphs $G_1$ and $G_2$, then $G$ is not chordal and $\reg \dfrac{S}{J_G}\leq c(G)$, (see also \cite[page 171]{SK2}).
\par \medskip Note also that the upper bound $c(G)$ in Theorem \ref{principal} can be sharp. See for example Figure \ref{sharp}. In addition, there are some graphs $G$ where $\reg \dfrac{S}{J_G}$ attains the lower bound $\mathcal{L}(G)$ with $\mathcal{L}(G) \neq c(G)$. For instance, any \emph{caterpillar} tree, which is not a path, has this property by \cite[Theorem~4.1]{CDI}. Recall that a caterpillar tree is a tree which has a path $P$ such that any vertex of the tree has distance at most one from $P$. On the other hand, if $G$ is a tree which is not a caterpillar, then $\mathcal{L}(G)< \reg \dfrac{S}{J_G}< c(G)$ by \cite[Theorem~4.1]{CDI} and \cite[Theorem~3.2]{KS2}.

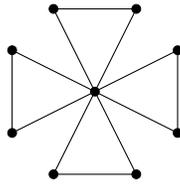
\begin{figure}[H]
\centering
\begin{tikzpicture}[scale=1.1,line cap=round,line join=round,>=triangle 45,x=1.0cm,y=1.0cm]
\draw (0.,0.)-- (0.5,1.);
\draw (-0.5,1.)-- (0.,0.);
\draw (0.,0.)-- (-1.,0.5);
\draw (0.,0.)-- (-1.,-0.5);
\draw (0.,0.)-- (-0.5,-1.);
\draw (0.,0.)-- (0.5,-1.);
\draw (0.,0.)-- (1.,-0.5);
\draw (0.,0.)-- (1.,0.5);
\draw (1.,0.5)-- (1.,-0.5);
\draw (0.5,-1.)-- (-0.5,-1.);
\draw (-1.,-0.5)-- (-1.,0.5);
\draw (-0.5,1.)-- (0.5,1.);
\begin{scriptsize}
\draw [fill=black] (0.,0.) circle (1.5pt);
\draw [fill=black] (1.,0.5) circle (1.5pt);
\draw [fill=black] (1.,-0.5) circle (1.5pt);
\draw [fill=black] (0.5,1.) circle (1.5pt);
\draw [fill=black] (-0.5,1.) circle (1.5pt);
\draw [fill=black] (-1.,0.5) circle (1.5pt);
\draw [fill=black] (-1.,-0.5) circle (1.5pt);
\draw [fill=black] (-0.5,-1.) circle (1.5pt);
\draw [fill=black] (0.5,-1.) circle (1.5pt);
\end{scriptsize}
\end{tikzpicture}
\caption{A chordal graph $G$ with $\mathcal{L}(G)=2$ and $\reg \dfrac{S}{J_G}=c(G)=4$.}
\label{sharp}
\end{figure}
	\section{strongly interval graphs}\label{characterization}
\par At the end of the previous section, we provided some examples of chordal graphs $G$ where $\reg \dfrac{S}{J_G}$ attains one of $\mathcal{L}(G)$, $c(G)$, or none of them. To complete that discussion, in this section we give a characterization of chordal graphs $G$ for which $\reg \dfrac{S}{J_G}=\mathcal{L}(G)=c(G)$.
\par The class of graphs with the above property is indeed a subclass of the well-studied so-called \emph{interval graphs.} Recall that a graph $G$ is called an interval graph if every vertex $v\in V(G)$ can be labeled with a real closed interval $I_v=[a_v, b_v]$ in such a way that two distinct vertices $v,w\in V(G)$ are adjacent if their corresponding intervals have non-empty intersection.
We identify the vertices of an interval graph $G$ with the corresponding intervals, namely we set $V(G)=\{I_1,\ldots, I_r\}$,  where $I_j=[a_j,b_j]$ with $a_j\leq b_j$ for every $1\leq j\leq r$.
\par \medskip Here we denote $\NN\cup \{0\}$ by $\NN_0$. 
\begin{Definition}\label{strongly interval}
\em{Let $k\in \NN$ and $r\in \NN_0$. Also, let $J_0=[0], J_i=[i-1,i]$ for $i=1,\ldots, k$ and $I_j=[a_j,b_j]$ such that $a_j\in \NN_0$ and $I_j\subseteq [0,k]\backslash \{k\}$, for all $j=1,\ldots, r$. Then we call the interval graph on the vertex set $\{J_i\}_{i=0}^{k}\cup \{I_j\}_{j=1}^{r}$ a \emph{connected strongly interval} graph. Moreover, we call a graph, a \emph{strongly interval} graph,  if every connected component of it is isomorphic to a connected strongly interval graph.}
\end{Definition}
Note that in the above definition, if $k=1$, then for every $r\geq 0$ we get the complete graph on ${r+2}$ vertices, and if $r=0$, then for every $k\geq 1$ we get the path $P_{k+1}$. 
\par \medskip Now we are ready to state the main theorem of this section.
\begin{Theorem}\label{strongly}
Let $G$ be a chordal graph. Then the following are equivalent:
\begin{enumerate}
\item[{(a)}] $\mathcal{L}(G)=c(G)$.
\item[{(b)}] $G$ is a strongly interval graph.
\end{enumerate}
\end{Theorem}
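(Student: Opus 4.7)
The plan is to prove both implications, first reducing to the connected case and then unpacking the maximal clique structure in each direction. Since $\mathcal{L}(G)=\sum_{i}\mathcal{L}(G_i)$ and $c(G)=\sum_{i}c(G_i)$ over the connected components $G_i$, and each component satisfies $\mathcal{L}(G_i)\leq \reg S/J_{G_i}\leq c(G_i)$ by Matsuda--Murai combined with Theorem~\ref{principal}, the equality $\mathcal{L}(G)=c(G)$ forces equality on every connected component. A graph is strongly interval precisely when each of its connected components is a connected strongly interval graph, so it suffices to assume $G$ is connected.

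For $(b)\Rightarrow(a)$, let $G$ be a connected strongly interval graph with backbone $J_0,\ldots,J_k$ and extra intervals $I_1,\ldots,I_r$. I would first show $c(G)=k$. For any maximal clique $C$ of $G$, the Helly property for intervals provides a common point $p\in \bigcap_{v\in C}I_v$. A short case analysis on whether $p$ lies in an open interval $(i-1,i)$ or at an integer $i-1\in[0,k-1]$, combined with the constraints $a_j\in \NN_0$, $I_j\subseteq [0,k]\backslash\{k\}$, and maximality of $C$, forces $C$ to contain exactly one consecutive pair $\{J_{i-1},J_i\}$ with $i\in\{1,\ldots,k\}$ (non-consecutive $J$'s never mutually intersect). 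Hence $c(G)\leq k$. Since the backbone is an induced path of length $k$, $\mathcal{L}(G)\geq k$, and together with $\mathcal{L}(G)\leq c(G)$ this yields $\mathcal{L}(G)=c(G)=k$.

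For $(a)\Rightarrow(b)$, let $G$ be a connected chordal graph with $\mathcal{L}(G)=c(G)=k$, and fix an induced path $P\colon J_0,J_1,\ldots,J_k$. The $k$ edges of $P$ lie in pairwise distinct maximal cliques $C_1,\ldots,C_k$: two edges of $P$ in a common clique would produce an adjacency between non-consecutive vertices of $P$. Since $c(G)=k$, these are all the maximal cliques of $G$, and the same argument gives $C_i\cap V(P)=\{J_{i-1},J_i\}$ for each $i$. The crux is to prove that every non-backbone vertex $v$ lies in a consecutive set $S_v=\{L_v,\ldots,R_v\}\subseteq\{1,\ldots,k\}$ of maximal cliques. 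If not, pick the smallest $l$ with $L_v<l\leq R_v$ and $v\notin C_l$, and the smallest $m\geq l+1$ with $v$ adjacent to $J_m$ (such $m$ exists because $v\in C_{R_v}$ while $v\notin C_l$ forces $v$ non-adjacent to $J_l$). The cycle
\[
v - J_{l-1} - J_l - J_{l+1} - \cdots - J_m - v
\]
has length at least $4$; the minimality of $m$ rules out every $v$--$J_a$ chord, and the inducedness of $P$ rules out every $J_a$--$J_b$ chord, contradicting chordality.

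Once consecutivity is established, I assign intervals $J_0\mapsto[0]$, $J_i\mapsto[i-1,i]$ for $1\leq i\leq k$, and each non-backbone vertex $v$ (renamed $I_j$) to $[L_j-1,R_j-1]$. The assignment satisfies $a_j=L_j-1\in \NN_0$ and $I_j\subseteq[0,k-1]\subseteq [0,k]\backslash\{k\}$, matching Definition~\ref{strongly interval}. A direct verification that two vertices share a maximal clique in $G$ if and only if the assigned intervals intersect then completes the proof. The main obstacle is the consecutivity step; everything else reduces to routine interval-graph bookkeeping.
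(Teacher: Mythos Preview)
Your proposal is correct and follows essentially the same approach as the paper, with only cosmetic differences in the bookkeeping. In the direction $(b)\Rightarrow(a)$ the paper explicitly writes down the facets $F_i=\{J_i,J_{i+1}\}\cup\{I_t:i\in I_t\}$ and checks both inclusions $\mathcal{F}(\Delta(G))\subseteq\{F_0,\ldots,F_{k-1}\}$ and $\supseteq$ by contradiction, whereas you reach the same conclusion via the Helly property; your case analysis (common point in an open unit interval versus at an integer) amounts to observing that $\lfloor p\rfloor$ is again a common point because every $a_j$ is an integer, which then forces $\{J_{\lfloor p\rfloor},J_{\lfloor p\rfloor+1}\}\subseteq C$ by maximality. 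In the direction $(a)\Rightarrow(b)$ the paper assigns to $u_i$ the interval $[a_i,d_i-1+1/2^{r-i+1}]$ using the extreme path-neighbours $a_i,d_i$ and then verifies adjacency case by case, invoking chordality at the step ``$a_i\leq t\leq d_i\Rightarrow\{u_i,t\}\in E(G)$''; you instead isolate that chordality step as an explicit consecutivity lemma for clique membership and then assign the purely integral interval $[L_v-1,R_v-1]$. Since $L_v=a_v+1$ and $R_v=d_v$, your interval is $[a_v,d_v-1]$, so the paper's fractional perturbation is in fact unnecessary, and your verification of the interval model is the same computation organised around cliques rather than neighbours.
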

\begin{proof}
Without loss of generality we may assume that $G$ is connected.
\par To show $(a)\Rightarrow (b)$,  suppose that $\mathcal{L}(G)=c(G)$. For simplicity, we set $\ell=\mathcal{L}(G)$. We may assume that $P\hspace{-1.2mm}:0,1, \ldots ,\ell$ is a longest induced path in $G$ and $W=V(G)\backslash V(P)=\{u_1,\ldots, u_r\}$. If $W=\emptyset$, then the assertion is clear. So, assume that $W\neq \emptyset$.  Notice that since $P$ is an induced path in $G$ and $\ell=c(G)$, it follows that $\mathcal{F}(\Delta(G))=\{F_0,F_1,\ldots, F_{\ell -1}\}$, where for every $0\leq t \leq \ell -1$, $F_t$ is the unique maximal clique of $G$ which contains the edge $\{t,t+1\}$.
\par Let $u_i \in W$. We set $a_i=\min \{j\in V(P): \{u_i,j \} \in E(G)\}$, $d_i=\max \{j\in V(P): \{u_i,j \} \in E(G)\}$ and $I_i=[a_i,d_{i}-1+\dfrac{1}{2^{r-i+1}}]$. Also we can assume that $a_i\leq a_j$ whenever $i\leq j$. Let $J_0=[0]$ and $J_t=[t-1,t]$ for $t=1,\ldots, \ell$. We show that $G$ is an interval graph on the vertex set $\{J_t\}_{t=0}^{\ell}\cup \{I_i\}_{i=1}^{r}$, where ${I_{i}}^{,}$ s correspond to ${u_{i}}^{,}$ s, and ${J_{t}}^{,}$ s correspond to the elements of $V(P)$.  
\par Suppose that $I_i\cap I_j=\emptyset$ for some $1\leq i<j \leq r$. So $d_i\leq a_j$, since $a_i\leq a_j$. If $d_i=a_j$, then $\{u_i,u_j\}\notin E(G)$, since otherwise for the clique $\{d_i,u_i,u_j\}$, we have $\{d_i,u_i,u_j\}\subseteq F_{d_i-1}$ or $\{d_i,u_i,u_j\}\subseteq F_{d_i}$. Therefore, $\{u_j, d_{i}-1=a_j-1\}\in E(G)$ or $\{u_i,d_{i}+1\}\in E(G)$, which is impossible by the choice of $a_j$ and $d_i$. If $d_i<a_j$, then $\{u_i,u_j\}\notin E(G)$, since $G$ is chordal.
\par Next suppose that $I_i\cap I_j \neq \emptyset$ for some $1\leq i< j \leq r$. Hence $a_j\leq d_{i}-1$, since $a_{j},d_{i}\in \NN_0$. So we have $a_i\leq a_j<a_{j}+1\leq d_i$, and hence $\{u_i,a_j\} \in E(G)$ and $\{u_i,a_{j}+1\} \in E(G)$, since $G$ is chordal. On the other hand, we have $\{u_j,a_{j}+1\} \in E(G)$, since $\{u_j,a_j\}\subseteq F_{a_j}$. Therefore $\{u_i,a_j,a_j+1\}\subseteq F_{a_j}$ and $\{u_j,a_j,a_j+1\}\subseteq F_{a_j}$. This implies that $u_i$ is adjacent to $u_j$.
\par Let $1\leq i\leq r$ and $0\leq t \leq \ell$. Assume that $I_i\cap J_t=\emptyset $. Then $t<a_i$ or $t>d_i$. If $t<a_i$, then clearly $\{u_i,t\}\notin E(G)$. If $t>d_i$, then we have $\{u_i,t\}\notin E(G)$.
\par Let $I_i\cap J_t\neq \emptyset$. So we have $a_i\leq t \leq d_i$ and hence $\{u_i,t\}\in E(G)$, since $G$ is chordal.
\par Finally, it is clear that for any $i\neq t$, $J_i\cap J_t=\emptyset$ if and only if $\{i,t\}\notin E(G)$, since $P$ is an induced path in $G$. Therefore, we deduce that $G$ is a strongly interval graph, as desired.
\par \medskip Now we prove $(b)\Rightarrow (a)$. Suppose that $G$ is a strongly interval graph. So, there exist $k\in \NN$ and $r\in \NN_0$ such that $G$ is an interval graph on the vertex set $\{J_i\}_{i=0}^{k}\cup \{I_j\}_{j=1}^{r}$, where $J_0=[0], J_1=[0,1], \ldots, J_k=[k-1,k]$ and $I_j=[a_j,b_j]\subseteq [0,k]\backslash \{k\}$ for some $a_i$ and $b_j$, where $a_j\in \NN_0$ for every $j=1, \ldots, r$.
\par Let $F_i=\{J_i,J_{i+1}\}\cup \{I_t: i\in I_t\}$, for all $0\leq i \leq k-1$. We show that $\mathcal{F}(\Delta(G))=\{F_0,F_1,\ldots, F_{k-1}\}$.
\par First we have $\{F_0,F_1,\ldots, F_{k-1}\}\subseteq \mathcal{F}(\Delta(G))$. In fact, suppose on contrary that there exists $0\leq i \leq k-1 $, in which $F_i \notin \mathcal{F}(\Delta(G))$. So, there exists $F\in \mathcal{F}(\Delta(G))$ such that $F_i\subsetneqq F$. Hence, $I_t\in F\backslash F_i$, for some $1\leq t \leq r$. Also, $I_t\cap J_{i+1}\neq \emptyset $, since $G$ is an interval graph on the vertex set $\{J_i\}_{i=0}^{k}\cup \{I_j\}_{j=1}^{r}$. This implies that $a_t=i+1$, since $i\notin I_t$ and $a_t\in \NN_0$. On the other hand, $I_t\cap J_i\neq \emptyset $, namely, $[i+1,b_t]\cap J_i\neq \emptyset$, a contradiction.
\par Next we show that $\mathcal{F}(\Delta(G))\subseteq \{F_0,F_1,\ldots, F_{k-1}\}$. Suppose on contrary that there exists $F \in \mathcal{F}(\Delta(G))\backslash \{F_0,F_1,\ldots, F_{k-1}\}$. We consider the following cases:
\par First, suppose that $F\cap \{J_0,J_1,\ldots,J_k\}\neq \emptyset$. If this intersection contains two elements like $J_i$ and $J_{i+1}$, then the above argument shows that any $I_t$ in $F$ should contain $i$, and hence $F=F_i$, a contradiction. So, assume that $F\cap \{J_0,J_1,\ldots, J_k\}=\{J_j\}$ for some $j=0,\ldots,k$. Without loss of generality assume that $F=\{J_j,I_1,\ldots,I_s\}$. Then $j\notin \bigcap\limits_{\substack{1\leq i \leq s}}I_i$, since otherwise we have $F\subseteq F_j$, which is a contradiction. So, $j\notin I_t$ for some $1\leq t \leq s$. Therefore $j>b_t$, since $J_j\cap I_t\neq \emptyset$. Similarly, there exists $1\leq t^{'}\leq s$ such that $j-1\notin I_{t^{'}}$. So, $j=a_{t^{'}}$, since $J_j\cap I_{t^{'}}\neq \emptyset$ and $a_{t^{'}}\in \NN_0$. Therefore we get $a_{t^{'}}>b_t$, which contradicts the fact that $I_t\cap I_{t^{'}}\neq \emptyset$.
\par Next suppose that $F\cap \{J_0,J_1,\ldots,J_k\}=\emptyset$. Without loss of generality assume that $F=\{I_1,\ldots, I_{s}\}$ and $a_1\leq \cdots \leq a_{s}$. Let $b_m=\min\{b_i: 1\leq i \leq s\}$. Thus $a_{s}\leq b_m$, since $I_m\cap I_{s}\neq \emptyset$. Therefore for every $1\leq i\leq s$, we have $a_i\leq b_m\leq b_i$,  and hence $\lfloor b_m \rfloor \in \bigcap\limits_{\substack{1\leq i \leq s}}I_i$, because $a_i\in \NN_0$ for all $i=1,\ldots,s$. So,  $F\subseteq F_{\lfloor b_{m}\rfloor}$ which is a contradiction. Therefore $\mathcal{F}(\Delta(G))=\{F_0,F_1,\ldots,F_{k-1}\}$, which implies that $c(G)=k$.
\par Finally, $P\hspace{-1.2mm}:J_0,J_1,\ldots,J_k$ is an induced path in $G$, since $G$ is an interval graph. So, $k\leq \mathcal{L}(G)$, which together with the fact that $\mathcal{L}(G)\leq c(G)$, yields $\mathcal{L}(G)=c(G)$, as desired.
\end{proof}
It is noteworthy that according to Theorem \ref{strongly} it is crucial in the definition of strongly interval graphs to take all of the left endpoints of the intervals $I_j$ to be integers. For example, Figure \ref{fig2} shows an interval graph $G$ with $\mathcal{L}(G)=3$ and $c(G)=4$. This by the above theorem implies that $G$ is not a strongly interval graph. In this figure we see a labeling of the vertices of the graph for which all of the conditions of Definition \ref{strongly interval} is full filled, except that $[1/3,2]$ has a non-integer left endpoint.
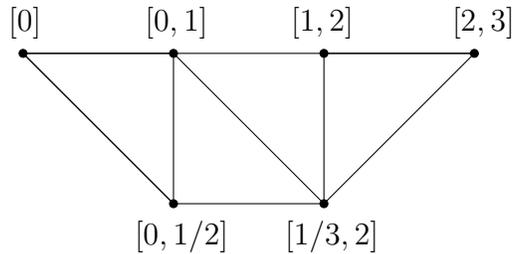
\begin{figure}[H]
\centering
\begin{tikzpicture}[scale=1,line cap=round,line join=round,>=triangle 45,x=1.0cm,y=1.0cm]
\draw  (-4.,3.)-- (-2.,3.);
\draw  (-2.,3.)-- (0.,3.);
\draw  (0.,3.)-- (2.,3.);
\draw  (-2.,1.)-- (-4.,3.);
\draw  (-2.,1.)-- (-2.,3.);
\draw  (-4.,3.)-- (-2.,3.);
\draw  (0.,3.)-- (2.,3.);
\draw  (-2.,1.)-- (-4.,3.);
\draw  (0.,1.)-- (-2.,1.);
\draw  (0.,1.)-- (2.,3.);
\draw  (0.,1.)-- (0.,3.);
\draw (-2.66,0.92) node[anchor=north west] {$[0,1/2]$};
\draw (-0.66,0.92) node[anchor=north west] {$[1/3,2]$};
\draw (-4.34,3.76) node[anchor=north west] {$[0]$};
\draw (-2.52,3.76) node[anchor=north west] {$[0,1]$};
\draw (-0.58,3.76) node[anchor=north west] {$[1,2]$};
\draw (1.56,3.76) node[anchor=north west] {$[2,3]$};
\draw  (-2.,3.)-- (0.,1.);
\begin{scriptsize}
\draw [fill=black] (-4.,3.) circle (1.5pt);
\draw [fill=black] (-2.,3.) circle (1.5pt);
\draw [fill=black] (0.,3.) circle (1.5pt);
\draw [fill=black] (2.,3.) circle (1.5pt);
\draw [fill=black] (-2.,1.) circle (1.5pt);
\draw [fill=black] (0.,1.) circle (1.5pt);
\end{scriptsize}
\end{tikzpicture}
\caption{An interval graph which is not strongly interval.}
\label{fig2}
\end{figure}
By Theorem \ref{principal} and \cite[Theorem~1.1]{MM}, we have $\mathcal{L}(G) \leq \reg \dfrac{S}{J_G}\leq c(G)$, for any chordal graph $G$. Combining this together with Theorem \ref{strongly}, we obtain the following characterization: 
\begin{Corollary}\label{reg stg}
Let $G$ be a chordal graph. Then the following are equivalent:
\begin{enumerate}
\item[{(a)}] $\reg \dfrac{S}{J_G}=\mathcal{L}(G)=c(G)$.
\item[{(b)}] $G$ is a strongly interval graph.
\end{enumerate}
\end{Corollary}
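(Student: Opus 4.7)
The plan is to deduce Corollary \ref{reg stg} almost directly from the sandwich of bounds
\[
\mathcal{L}(G)\leq \reg \dfrac{S}{J_G}\leq c(G),
\]
valid for every chordal graph $G$, together with Theorem \ref{strongly}. The lower bound is \cite[Theorem~1.1]{MM}, stated in the excerpt, and the upper bound is Theorem \ref{principal} of this paper. Hence for any chordal graph, the three quantities $\mathcal{L}(G)$, $\reg S/J_G$ and $c(G)$ collapse to one another as soon as the outer two do.

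For the direction $(b)\Rightarrow(a)$, I would assume that $G$ is a strongly interval graph. Since strongly interval graphs are in particular interval graphs, they are chordal, so Theorem \ref{principal} applies and yields $\reg S/J_G \leq c(G)$. On the other hand, Theorem \ref{strongly} applied to $G$ gives $\mathcal{L}(G)=c(G)$. Chaining with the Matsuda--Murai lower bound, we obtain
\[
\mathcal{L}(G)\leq \reg \dfrac{S}{J_G}\leq c(G)=\mathcal{L}(G),
\]
forcing equality throughout, which is exactly statement (a).

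For the converse $(a)\Rightarrow(b)$, assume $\reg S/J_G=\mathcal{L}(G)=c(G)$. In particular $\mathcal{L}(G)=c(G)$, and since $G$ is chordal, Theorem \ref{strongly} immediately delivers that $G$ is a strongly interval graph.

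There is essentially no obstacle: the content has been concentrated in Theorems \ref{principal} and \ref{strongly}, and the corollary is only a bookkeeping statement that packages those results with \cite[Theorem~1.1]{MM}. The only small thing worth double-checking while writing the proof is that one is free to drop the connectedness assumption (strongly interval graphs are defined componentwise, $\mathcal{L}$ is additive over components, $c$ is additive, and $\reg$ of a disjoint union behaves additively via \cite[Proposition~3.1.33]{V} and \cite[Lemma~2.1]{K}, both already used in the proof of Theorem \ref{principal}), so no separate argument is needed for the disconnected case.
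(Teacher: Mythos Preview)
Your proposal is correct and follows essentially the same approach as the paper: the paper simply notes that Theorem~\ref{principal} together with \cite[Theorem~1.1]{MM} gives $\mathcal{L}(G)\leq \reg S/J_G\leq c(G)$ for any chordal graph $G$, and then invokes Theorem~\ref{strongly}. Your additional remark on the disconnected case is harmless but unnecessary, since the cited bounds and Theorem~\ref{strongly} are already stated for arbitrary (not necessarily connected) chordal graphs.
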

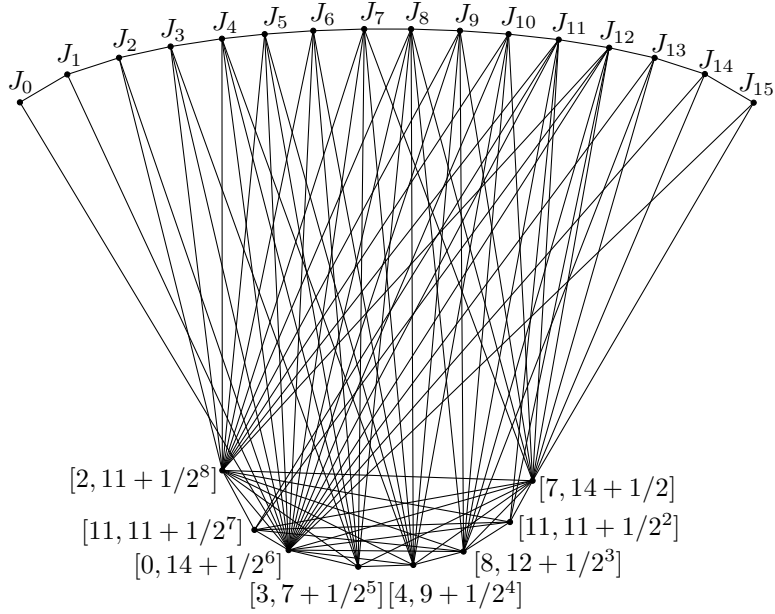
\begin{figure}[H]
\centering
\begin{tikzpicture}[scale=.65,line cap=round,line join=round,>=triangle 45,x=1.0cm,y=1.0cm]
\draw (0.,0.5345770464244869)-- (0.9690366470932892,1.1049742721388274);
\draw (0.9690366470932892,1.1049742721388274)-- (2.0271652100256032,1.447275651690137);
\draw (2.0271652100256032,1.447275651690137)-- (3.0817230921875938,1.6769181630382783);
\draw (3.0817230921875938,1.6769181630382783)-- (4.132030874991802,1.8369949698875234);
\draw (4.132030874991802,1.8369949698875234)-- (5.006171422891897,1.9305418513594415);
\draw (5.006171422891897,1.9305418513594415)-- (6.,2.);
\draw (6.,2.)-- (7.041630400335793,2.034774727928561);
\draw (7.041630400335793,2.034774727928561)-- (8.000577044186286,2.0340905250112593);
\draw (8.000577044186286,2.0340905250112593)-- (9.,2.);
\draw (9.,2.)-- (9.993828577108104,1.930541851359442);
\draw (9.993828577108104,1.930541851359442)-- (11.019644579775033,1.8173318954163666);
\draw (11.019644579775033,1.8173318954163666)-- (12.052394258301216,1.652059731750729);
\draw (12.052394258301216,1.652059731750729)-- (12.981321428113416,1.4450649002544242);
\draw (12.981321428113416,1.4450649002544242)-- (14.009420107306685,1.1136836248788047);
\draw (14.009420107306685,1.1136836248788047)-- (15.007312553252468,0.5274697728249764);
\draw (4.13975064407644,-6.998956694419681)-- (2.0271652100256032,1.447275651690137);
\draw (4.13975064407644,-6.998956694419681)-- (3.0817230921875938,1.6769181630382783);
\draw (4.13975064407644,-6.998956694419681)-- (4.132030874991802,1.8369949698875234);
\draw (5.495523599040781,-8.630339858741882)-- (0.,0.5345770464244869);
\draw (5.495523599040781,-8.630339858741882)-- (0.9690366470932892,1.1049742721388274);
\draw (5.495523599040781,-8.630339858741882)-- (2.0271652100256032,1.447275651690137);
\draw (6.919189379349907,-8.970322650891195)-- (4.132030874991802,1.8369949698875234);
\draw (6.919189379349907,-8.970322650891195)-- (5.006171422891897,1.9305418513594415);
\draw (6.919189379349907,-8.970322650891195)-- (6.,2.);
\draw (6.919189379349907,-8.970322650891195)-- (7.041630400335793,2.034774727928561);
\draw (8.045908166890984,-8.934103532210445)-- (4.132030874991802,1.8369949698875234);
\draw (8.045908166890984,-8.934103532210445)-- (5.006171422891897,1.9305418513594415);
\draw (8.045908166890984,-8.934103532210445)-- (6.,2.);
\draw (6.919189379349907,-8.970322650891195)-- (3.0817230921875938,1.6769181630382783);
\draw (9.073524590973452,-8.660182989320854)-- (8.000577044186286,2.0340905250112593);
\draw (9.073524590973452,-8.660182989320854)-- (9.,2.);
\draw (9.073524590973452,-8.660182989320854)-- (9.993828577108104,1.930541851359442);
\draw (10.4854543354431,-7.2105717254700865)-- (7.041630400335793,2.034774727928561);
\draw (10.4854543354431,-7.2105717254700865)-- (8.000577044186286,2.0340905250112593);
\draw (10.4854543354431,-7.2105717254700865)-- (9.,2.);
\draw (10.4854543354431,-7.2105717254700865)-- (9.993828577108104,1.930541851359442);
\draw (10.4854543354431,-7.2105717254700865)-- (11.019644579775033,1.8173318954163666);
\draw (10.4854543354431,-7.2105717254700865)-- (12.052394258301216,1.652059731750729);
\draw (10.4854543354431,-7.2105717254700865)-- (12.981321428113416,1.4450649002544242);
\draw (8.045908166890984,-8.934103532210445)-- (7.041630400335793,2.034774727928561);
\draw (8.045908166890984,-8.934103532210445)-- (8.000577044186286,2.0340905250112593);
\draw (8.045908166890984,-8.934103532210445)-- (9.,2.);
\draw (8.045908166890984,-8.934103532210445)-- (9.993828577108104,1.930541851359442);
\draw (9.073524590973452,-8.660182989320854)-- (11.019644579775033,1.8173318954163666);
\draw (9.073524590973452,-8.660182989320854)-- (12.052394258301216,1.652059731750729);
\draw (9.073524590973452,-8.660182989320854)-- (12.981321428113416,1.4450649002544242);
\draw (6.919189379349907,-8.970322650891195)-- (8.000577044186286,2.0340905250112593);
\draw (5.495523599040781,-8.630339858741882)-- (15.007312553252468,0.5274697728249764);
\draw (5.495523599040781,-8.630339858741882)-- (14.009420107306685,1.1136836248788047);
\draw (5.495523599040781,-8.630339858741882)-- (12.981321428113416,1.4450649002544242);
\draw (5.495523599040781,-8.630339858741882)-- (12.052394258301216,1.652059731750729);
\draw (5.495523599040781,-8.630339858741882)-- (11.019644579775033,1.8173318954163666);
\draw (5.495523599040781,-8.630339858741882)-- (9.993828577108104,1.930541851359442);
\draw (5.495523599040781,-8.630339858741882)-- (9.,2.);
\draw (5.495523599040781,-8.630339858741882)-- (8.000577044186286,2.0340905250112593);
\draw (5.495523599040781,-8.630339858741882)-- (7.041630400335793,2.034774727928561);
\draw (5.495523599040781,-8.630339858741882)-- (6.,2.);
\draw (5.495523599040781,-8.630339858741882)-- (5.006171422891897,1.9305418513594415);
\draw (5.495523599040781,-8.630339858741882)-- (4.132030874991802,1.8369949698875234);
\draw (4.13975064407644,-6.998956694419681)-- (5.006171422891897,1.9305418513594415);
\draw (5.495523599040781,-8.630339858741882)-- (4.13975064407644,-6.998956694419681);
\draw (5.495523599040781,-8.630339858741882)-- (6.919189379349907,-8.970322650891195);
\draw (5.495523599040781,-8.630339858741882)-- (8.045908166890984,-8.934103532210445);
\draw (5.495523599040781,-8.630339858741882)-- (9.073524590973452,-8.660182989320854);
\draw (5.495523599040781,-8.630339858741882)-- (10.4854543354431,-7.2105717254700865);
\draw (9.073524590973452,-8.660182989320854)-- (10.4854543354431,-7.2105717254700865);
\draw (8.045908166890984,-8.934103532210445)-- (9.073524590973452,-8.660182989320854);
\draw (10.4854543354431,-7.2105717254700865)-- (6.919189379349907,-8.970322650891195);
\draw (8.045908166890984,-8.934103532210445)-- (6.919189379349907,-8.970322650891195);
\draw (8.045908166890984,-8.934103532210445)-- (10.4854543354431,-7.2105717254700865);
\draw (6.919189379349907,-8.970322650891195)-- (4.13975064407644,-6.998956694419681);
\draw (8.045908166890984,-8.934103532210445)-- (4.13975064407644,-6.998956694419681);
\draw (-0.48,1.396666666666697) node[anchor=north west] {\footnotesize{$J_0$}};
\draw (0.52,1.9466666666666693) node[anchor=north west] {\footnotesize{$J_1$}};
\draw (1.66,2.256666666666669) node[anchor=north west] {\footnotesize{$J_2$}};
\draw (2.64,2.466666666666669) node[anchor=north west] {\footnotesize{$J_3$}};
\draw (3.7,2.61666666666669) node[anchor=north west] {\footnotesize{$J_4$}};
\draw (4.74,2.71666666666669) node[anchor=north west] {\footnotesize{$J_5$}};
\draw (5.7,2.7966666666688) node[anchor=north west] {\footnotesize{$J_6$}};
\draw (6.72,2.816666666666688) node[anchor=north west] {\footnotesize{$J_7$}};
\draw (7.64,2.81666666666669) node[anchor=north west] {\footnotesize{$J_8$}};
\draw (8.66,2.76666666666688) node[anchor=north west] {\footnotesize{$J_9$}};
\draw (9.6,2.71666666666669) node[anchor=north west] {\footnotesize{$J_{10}$}};
\draw (10.64,2.596666666666669) node[anchor=north west] {\footnotesize{$J_{11}$}};
\draw (11.62,2.416666666666669) node[anchor=north west] {\footnotesize{$J_{12}$}};
\draw (12.68,2.156666666666669) node[anchor=north west] {\footnotesize{$J_{13}$}};
\draw (13.64,1.826666666666694) node[anchor=north west] {\footnotesize{$J_{14}$}};
\draw (14.47,1.3566666666666697) node[anchor=north west] {\footnotesize{$J_{15}$}};
\draw (.77,-6.6733333333333327) node[anchor=north west]
{\footnotesize{$[2,11+1/2^8]$}};
\draw (5.495523599040781,-8.630339858741882)-- (3.0817230921875938,1.6769181630382783);
\draw (2.1,-8.43333333333326) node[anchor=north west]
{\footnotesize{$[0,14+1/2^6]$}};
\draw (4.45,-9.03333333333325) node[anchor=north west]
{\footnotesize{$[3,7+1/2^5]$}};
\draw (7.29,-9.03333333333325) node[anchor=north west]
{\footnotesize{$[4,9+1/2^4]$}};
\draw (9.05,-8.39333333333326) node[anchor=north west]
{\footnotesize{$[8,12+1/2^3]$}};
\draw (10.37,-6.893333333333327) node[anchor=north west] {\footnotesize{$[7,14+1/2]$}};
\draw (10.4854543354431,-7.2105717254700865)-- (14.009420107306685,1.1136836248788047);
\draw (4.7966652930791405,-8.216719542092415)-- (10.023248381919611,-8.05577138913707);
\draw (5.495523599040781,-8.630339858741882)-- (4.7966652930791405,-8.216719542092415);
\draw (10.023248381919611,-8.05577138913707)-- (5.495523599040781,-8.630339858741882);
\draw (10.023248381919611,-8.05577138913707)-- (11.019644579775033,1.8173318954163666);
\draw (10.023248381919611,-8.05577138913707)-- (12.052394258301216,1.652059731750729);
\draw (10.023248381919611,-8.05577138913707)-- (9.073524590973452,-8.660182989320854);
\draw (10.023248381919611,-8.05577138913707)-- (10.4854543354431,-7.2105717254700865);
\draw (4.7966652930791405,-8.216719542092415)-- (12.052394258301216,1.652059731750729);
\draw (4.7966652930791405,-8.216719542092415)-- (11.019644579775033,1.8173318954163666);
\draw (1.03,-7.733333333326) node[anchor=north west]
{\footnotesize{$[11,11+1/2^7]$}};
\draw (9.95,-7.651333333333326) node[anchor=north west]
{\footnotesize{$[11,11+1/2^2]$}};
\draw (4.13975064407644,-6.998956694419681)-- (6.,2.);
\draw (4.13975064407644,-6.998956694419681)-- (7.041630400335793,2.034774727928561);
\draw (4.13975064407644,-6.998956694419681)-- (8.000577044186286,2.0340905250112593);
\draw (4.13975064407644,-6.998956694419681)-- (9.,2.);
\draw (4.13975064407644,-6.998956694419681)-- (9.993828577108104,1.930541851359442);
\draw (4.13975064407644,-6.998956694419681)-- (11.019644579775033,1.8173318954163666);
\draw (4.13975064407644,-6.998956694419681)-- (12.052394258301216,1.652059731750729);
\draw (4.7966652930791405,-8.216719542092415)-- (4.13975064407644,-6.998956694419681);
\draw (10.023248381919611,-8.05577138913707)-- (4.13975064407644,-6.998956694419681);
\draw (4.13975064407644,-6.998956694419681)-- (10.4854543354431,-7.2105717254700865);
\draw (4.13975064407644,-6.998956694419681)-- (9.073524590973452,-8.660182989320854);
\draw (4.7966652930791405,-8.216719542092415)-- (10.4854543354431,-7.2105717254700865);
\draw (4.7966652930791405,-8.216719542092415)-- (9.073524590973452,-8.660182989320854);
\draw (10.4854543354431,-7.2105717254700865)-- (15.007312553252468,0.5274697728249764);
\begin{scriptsize}
\draw [fill=black] (0.9690366470932892,1.1049742721388274) circle (1.5pt);
\draw [fill=black] (2.0271652100256032,1.447275651690137) circle (1.5pt);
\draw [fill=black] (3.0817230921875938,1.6769181630382783) circle (1.5pt);
\draw [fill=black] (4.132030874991802,1.8369949698875234) circle (1.5pt);
\draw [fill=black] (5.006171422891897,1.9305418513594415) circle (1.5pt);
\draw [fill=black] (6.,2.) circle (1.5pt);
\draw [fill=black] (7.041630400335793,2.034774727928561) circle (1.5pt);
\draw [fill=black] (8.000577044186286,2.0340905250112593) circle (1.5pt);
\draw [fill=black] (9.,2.) circle (1.5pt);
\draw [fill=black] (9.993828577108104,1.930541851359442) circle (1.5pt);
\draw [fill=black] (11.019644579775033,1.8173318954163666) circle (1.5pt);
\draw [fill=black] (12.052394258301216,1.652059731750729) circle (1.5pt);
\draw [fill=black] (12.981321428113416,1.4450649002544242) circle (1.5pt);
\draw [fill=black] (14.009420107306685,1.1136836248788047) circle (1.5pt);
\draw [fill=black] (15.007312553252468,0.5274697728249764) circle (1.5pt);
\draw [fill=black] (0.,0.5345770464244869) circle (1.5pt);
\draw [fill=black] (4.13975064407644,-6.998956694419681) circle (1.5pt);
\draw [fill=black] (5.495523599040781,-8.630339858741882) circle (1.5pt);
\draw [fill=black] (6.919189379349907,-8.970322650891195) circle (1.5pt);
\draw [fill=black] (8.045908166890984,-8.934103532210445) circle (1.5pt);
\draw [fill=black] (9.073524590973452,-8.660182989320854) circle (1.5pt);
\draw [fill=black] (10.4854543354431,-7.2105717254700865) circle (1.5pt);
\draw [fill=black] (10.023248381919611,-8.05577138913707) circle (1.5pt);
\draw [fill=black] (4.7966652930791405,-8.216719542092415) circle (1.5pt);
\end{scriptsize}
\end{tikzpicture}
\caption{A strongly interval graph $G$ with $\reg \dfrac{S}{J_G}=\mathcal{L}(G)=c(G)=15$.}
\label{fig3}
\end{figure}
We end this section by a brief discussion on a recent conjectured upper bound by Hibi and Matsuda in \cite{HM} with a special comparison with $c(G)$ for a class of strongly interval graphs.
\par Recall that for a graph $G$ the Hilbert series of $\dfrac{S}{J_G}$ has the unique presentation of the form
\[
\hilb _{S/J_G}(\lambda)=\dfrac{h_G(\lambda)}{(1-\lambda)^d},
\]
where $h_G(\lambda)\in \ZZ[\lambda]$ and $d=\dim \dfrac{S}{J_G}$. The polynomial $h_G(\lambda)$ is called the h-polynomial of $\dfrac{S}{J_G}$. Recently in \cite{HM} it was conjectured that $\reg \dfrac{S}{J_G}\leq \deg h_G(\lambda)$, for any graph $G$. 
In the sequel, using join product of graphs, we construct an infinite family $\{\mathcal{G}_t\}_{t=1}^{\infty}$ of strongly interval graphs with
\begin{equation}\label{degree}
\lim_{t \to \infty}\dfrac{c(\mathcal{G}_t)}{\deg h_{\mathcal{G}_{t}}(\lambda)}=0.
\end{equation}
More precisely, for every $t\in \NN$, let $\mathcal{G}_t=P_m*K_t$, where $m\geq 3$ and $K_t$ is the complete graph on $t$ vertices. It is observed that $c(\mathcal{G}_t)=m-1$. On the other hand, $\dim \dfrac{S}{J_{\mathcal{G}_t}}=m+t+1$, by \cite[Proposition~4.1, part~(c)]{KS1}. Applying \cite[Theorem~4.13]{Kumar} we get
\[
\hilb_{S/J_{\mathcal{G}_t}}(\lambda)=\dfrac{(1-\lambda)^t \big((1+\lambda)^{m-1}-m\lambda +\lambda -1 \big )+(m+t-1)\lambda+1}{(1-\lambda)^{m+t+1}}.
\]
Therefore, $\deg h_{\mathcal{G}_t}(\lambda)=t+m-1$, and hence \eqref{degree} follows.
\par \medskip In addition, one can see that 
\begin{equation}\label{inf ex}
\lim_{t \to \infty}\dfrac{c(\mathcal{G}_t)}{\vert V(\mathcal{G}_t)\vert-2}=0.
\end{equation}
In fact, \eqref{degree} and \eqref{inf ex} show that for some chordal graphs $G$ on $n$ vertices the upper bound $c(G)$ could be extremely tighter than $\deg h_G(\lambda)$ and $n-2$.
\vspace{1cm}
\par \textbf{Acknowledgments:} The authors would like to thank the Institute for Research in Fundamental Sciences (IPM) for financial support. The research of the second author was in part supported by a grant from IPM (No. 96130024). The research of the third author was in part supported by a grant from IPM (No. 96050212).

\end{document}